\newtheorem{thm}{Theorem}[section]
\newtheorem{lem}[thm]{Lemma}
\newtheorem{prop}[thm]{Proposition}
\newtheorem{mainthm}[thm]{Main Theorem}
\theoremstyle{definition}
\numberwithin{equation}{section}
\DeclareSymbolFont{bbold}{U}{bbold}{m}{n}
\DeclareSymbolFontAlphabet{\mathbbold}{bbold}
\renewcommand{\mod}[1]{\ \left(\textnormal{mod}\ #1\right)}
\renewcommand{\l}{\lambda}
\DeclareMathOperator*{\Osum}{\sum{}^*}
\begin{document}


\baselineskip=17pt



\title{Uniformly Counting  Rational Points on Conics}
\author[E.Sofos]{Efthymios Sofos}
\address{School of Mathematics \\ University of Bristol \\ Bristol, BS8 1TW, United Kingdom}
\email{efthymios.sofos@bristol.ac.uk}

\date{}

\begin{abstract}We provide an asymptotic estimate for the number of
rational points of bounded height on a 
non--singular conic
over $\mathbb{Q}$.
The estimate is uniform in the coefficients of the underlying quadratic form.
\end{abstract}

\subjclass[2010]{Primary 11D45; Secondary 14G05}

\keywords{rational points, quadratic forms}

\maketitle

\section{Introduction}
\label{1s}
Let $Q\left(\mathbf{x}\right) \in \mathbb{Z}[x_1,x_2,x_3]$
be a non--singular quadratic form.
We denote by $\mathbb{Z}^3_{\text{prim}}$
the integer vectors $\mathbf{x}$
that are primitive, i.e. that satisfy
$\gcd(\mathbf{x})=1.$
Our main concern in this paper
regards the number of primitive integer zeros of $Q$
contained on an expanding region of $\mathbb{R}^3.$
It is therefore
only the case that $Q$ 
is isotropic
that we are interested in and we will proceed under this assumption for the rest of the paper.

For any arbitrary norm
$\|.\| : \mathbb{R}^3 \to \mathbb{R}_{\geq 0}$
define the counting function
$$
N\left(Q,B\right):=\#\{\mathbf{x} \in \mathbb{Z}^3_{\text{prim}}:Q\left(\mathbf{x}\right)=0,\|\mathbf{x}\|\leq B\}.$$
A very special case of the work~\cite{fmt} establishes
the asymptotic formula 
$$
N\left(Q,B\right) \sim c_{Q}   B,
$$
valid for $B \to \infty.$
This confirms the Manin conjecture and furthermore 
$c_{Q}=c_{Q}\left(\|.\|\right)$ is the constant predicted in~\cite{peyre}.

Let $\langle Q \rangle$ denote the 
maximum modulus of the coefficients of $Q.$ 
As pointed out in~\cite{tbvv},
one expects the existence of absolute constants $\beta,\gamma>0$ 
such that
\[
N\left(Q,B\right)=c_{Q}  B+O\left(B^{1-\gamma} {\langle Q \rangle}^{\beta}\right).
\] 
Our aim is
to establish such an estimate
and furthermore to state
explicitly  admissible values for
$\beta$ and $\gamma.$ 

We begin by recalling existing results related to this subject. 
Let 
\newline
$w:\mathbb{R}^3 \to \mathbb{R}_{\geq 0}$ be a smooth weight function of compact support
and~let $$N_w\left(Q,B\right):=\sum_{\substack{ \mathbf{x} \in \mathbb{Z}^3_{\text{prim}}\\
Q\left(\mathbf{x}\right)=0}} w\left(B^{-1}\mathbf{x}\right).$$
It is proved in~\cite[Cor.2]{HB'} that there exists a positive constant $c_1$
such that one has 
$$
N_{w}\left(Q,B\right)=c_{Q,w}  B+O_{Q,w}\left(B\exp\{-c_1\sqrt{\log B}\}\right),
$$
as $B \to \infty.$ The proof is carried out via a modification of the circle method.

Let $\Delta_Q$ and
$\delta_Q$ be the discriminant and the greatest common divisor of the $2 \times 2$ minors of the
matrix of the form $Q$ respectively. 
In~\cite[Cor. 2]{n-2}, it is proved
that 
$$
N\left(Q,B\right) \ll  
\tau\left(|\Delta_Q|\right)
\left(1+\frac{B{\delta_Q}^{1/2}}{|\Delta_Q|^{1/3}}\right),
$$
where $\tau$ denotes the divisor function.
It should be stressed that the implied constant 
is absolute.

We provide the definition of the leading constant 
$c_{Q}$ before stating our main result.
We define the Hardy--Littlewood local densities following~\cite{HB'}.
Let 
\begin{equation}
\label{si}
\sigma_\infty:=\sigma_{\infty}\left(Q,\|.\|\right)=\lim_{\epsilon \to 0}
\frac{1}{2\epsilon}
\int_{\substack{|Q\left(\mathbf{x}\right)|\leq \epsilon \\
\|\mathbf{x}\| \leq 1}}1 \,\mathrm{d}\mathbf{x},
\end{equation}
and similarly for any prime $p,$ let
\begin{equation}
\label{s}
\sigma_p:=\sigma_p\left(Q\right)=\lim_{n \to \infty}\frac{1}{p^{2n}}N_Q^*\left(p^n\right),
\end{equation}
where for any positive integer $n,$ 
$$N_Q^*\left(p^n\right):=
\#\{\mathbf{x} \mod{p^n}:p\nmid \mathbf{x},~Q\left(\mathbf{x}\right)\equiv 0 \mod{p^n}\}.
$$
The Peyre constant is then defined as 
\[
c_{Q}=
\frac{1}{2}\sigma_{\infty} 
\prod_{p}\sigma_p
\]
where
the product 
is taken over the set of primes
and is convergent.
Let
$C \subseteq \mathbb{P}^2$
be the smooth projective curve defined by $Q.$
The existence of the factor
$\frac{1}{2}$
is due to the fact 
that the anticanonical line bundle is twice the generator of the Picard group
$\mathrm{Pic}\left(C\right) \cong \mathbb{Z},$
where 
$\alpha\left(C\right)$ is 
the volume of a certain polytope 
contained in the cone of effective 
divisors.

Next, let
\begin{equation}
\label{psy}
K_0:=1+\sup_{\mathbf{x} \neq \mathbf{0}}
\frac{\ \ \|\mathbf{x}\|_{\infty}}{\|\mathbf{x}\|},
\end{equation}
and 
notice that $K_0$ is a constant depending only on the choice of norm $\|.\|.$
A norm 
$\|.\| : \mathbb{R}^3 \to \mathbb{R}_{\geq 0}$
is called isometric to the supremum norm 
$\|.\|_\infty$
when there exists an invertible matrix
$\mathfrak{g}
\in GL_3(\mathbb{R})$
such that 
$ \|\mathbf{x}\|
=\|\mathfrak{g}\mathbf{x}\|_{\infty}$
for all $\mathbf{x} \in \mathbb{R}^3.$

We have the following result.
\begin{mainthm} 
\label{thrm01}
Let $Q$ be a ternary non--singular integer quadratic form with a rational zero
and let $\|.\|$ be any norm isometric to the maximum norm.
Then
$$N\left(Q,B\right)= c_{Q} 
B+
O\left(\left(B K_0\right)^{\frac{1}{2}} \left(\log B K_0\right) \  {\langle Q \rangle}^{5}\right),$$
for $B \geq 2.$ 
The implied constant in the estimate
is absolute.  
\end{mainthm} 
The proof of Theorem~\ref{thrm01} reveals that
for any $\epsilon>0$,
at the expense of an implied constant
that depends on $\epsilon,$
one can replace the term ${\langle Q \rangle}^{5}$ appearing in the error term by 
${\langle Q \rangle}^{\frac{19}{20}+\epsilon}$
as well as 
${\langle Q \rangle}^{4+\epsilon}
\delta_Q^{\frac{1}{2}}$ ~(see~\eqref{lastend}).
Further improvements
may follow using~\cite[Theorem 1]{hool}.
We hope it will be
apparent to the reader that the main value of Theorem~\ref{thrm01} lies in its generality rather than the exponent of 
${\langle Q \rangle}$ obtained.

The proof is conducted in two stages. Firstly, in \S\ref{2s}--\S\ref{5s},
we prove Theorem~\ref{thrm01} for conics of a special shape, using the fact that since
$C\left(\mathbb{Q}\right)\!\neq \!\emptyset,$
there is a morphism $\mathbb{P}^1 \to C.$
The conditions involving the resulting 
parametrising functions
lead to a
lattice counting problem.
One should comment that
the choice of the parametrising functions 
is not unique and that choosing them 
appropriately plays a significant 
r\^ole. An amount of work
regarding this issue
has taken place, as the papers~\cite{crem}
and~\cite{simon}
reveal. The second stage is performed in \S\ref{7s}. Here we apply a unimodular transformation
to a conic of general shape to transform the problem into the one we have already treated. 

{\bf Notation.} 
The implied constants in the $O\left(.\right)$ notation will be absolute throughout this paper, 
except where specifically indicated, via the use of
a subscript.
The norm notation $\|.\|$ will be reserved for 
norms of elements of $\mathbb{R}^3$
while $\|.\|_{\infty}$
will be used for the matrix supremum
norm in $\mathbb{R}^{3\times 3}$,
defined by
$\|(a_{i,j})_{1\leq i,j \leq 3}\|_{\infty}:=\max_{1\leq i,j \leq 3}|a_{i,j}|,$
as well as the supremum norm of $\mathbb{R}^3.$ 
We denote the generalised divisor function by
$\tau_k\left(n\right),$ which is defined to be the number of representations of $n$ as the product 
of $k$ natural numbers. The well--known bound
$\tau_k\left(n\right) \ll_{k,\epsilon} n^\epsilon,$
valid for each $\epsilon>0,$ shall be used.
By $\Osum_{\left(s,t\right)\mod n},$ we shall mean a summation for
$s,t \in [1,n],$ subject to the condition
$\gcd(s,t,n)=1.$

\section{Preliminary estimates}
\label{2s}
Throughout \S\ref{2s}--\S\ref{5s}, we denote by $Q$ the quadratic forms 
of which $(0,1,0)$ is a zero, i.e.
$$
Q\left(\mathbf{x}\right)=ax^2+bxy+dxz+eyz+fz^2,
$$
where $a,\ldots,f \in \mathbb{Z}.$
We will denote 
by $\Delta_Q$
its
discriminant,
$$
\Delta_Q =ae^2
-deb+fb^2.
$$
It is our intention in the aforementioned sections
to prove the following 
special version of Theorem~\ref{thrm01}.
Its proof hinges upon the classical parametrisation of a conic by the lines
going through
a given point.
\begin{prop}
\label{prop1}
Let $Q$ be a non--singular integer ternary quadratic form as above. Then for any norm isometric to the maximum norm and for any $\epsilon>0,$ one has
$$
N\left(Q,B\right)=c_{Q}  B  
+O_{\epsilon}\!\left( \,
\left(B K_0\right)^{\frac{1}{2}}
\log \left(B K_0\right) \!
\min\Big\{|\Delta_Q|^{\frac{1}{4}},
\delta_Q^{\frac{1}{2}}\Big\}
\!
\left(|\Delta_Q|
+
{\langle Q \rangle}\right)
{\langle Q \rangle}^{\epsilon}
\, \right)\!,
$$
for $B\geq 2.$ 
\end{prop}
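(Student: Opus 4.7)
The plan is to exploit the classical parametrization of $C$ by lines through the rational point $(0,1,0)$. Substituting $(x,y,z)=(\lambda s,\mu,\lambda t)$ into $Q$ factors as $\lambda\bigl[\lambda Q^*(s,t)+\mu L(s,t)\bigr]$, where $L(s,t):=bs+et$ and $Q^*(s,t):=as^2+dst+ft^2$; after discarding the trivial root $\lambda=0$ (which recovers the base point) and clearing denominators, one obtains the morphism $(s,t)\mapsto(sL,-Q^*,tL)$. Every primitive integer zero of $Q$ other than $(0,1,0)$ arises from a unique unordered pair $\pm(s,t)$ with $\gcd(s,t)=1$, after dividing by $g:=\gcd(L(s,t),Q^*(s,t))$. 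The key rigidity is the identity $Q^*(-e,b)=\Delta_Q$: whenever $(s,t)$ is coprime, any prime dividing both $L(s,t)$ and $Q^*(s,t)$ must divide $\Delta_Q$ up to a bounded invariant of $Q$, so $g$ is constrained to lie among divisors of a small multiple of $\Delta_Q$. This is the source of the savings in $|\Delta_Q|^{1/4}$.

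First I would rewrite, up to an $O(1)$ contribution from the base point,
\[
N(Q,B)=\tfrac{1}{2}\sum_{g}\#\!\left\{(s,t)\in\mathbb{Z}^2_{\text{prim}}:g\mid L,\ g\mid Q^*,\ \gcd(L/g,Q^*/g)=1,\ \|\Phi_g(s,t)\|\le B\right\},
\]
where $\Phi_g(s,t):=g^{-1}(sL,-Q^*,tL)$. M\"obius inversion on both coprimality conditions reduces this to counting points of the sublattice $\Lambda_{g,k}:=\{(s,t)\in k\mathbb{Z}^2:g\mid L(s,t),\ g\mid Q^*(s,t)\}$ inside the convex region $\{(s,t):\|\Phi_g(s,t)\|\le B\}$. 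The covolume of $\Lambda_{g,k}$ is read off from the local behaviour of the pair $(L,Q^*)$ modulo $g$, and the region is a convex body of area $\asymp B$ and diameter $\asymp(BK_0\langle Q\rangle)^{1/2}$.

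Next I would invoke the standard geometry-of-numbers estimate
\[
\#(\mathcal R\cap\Lambda)=\frac{\mathrm{vol}\,\mathcal R}{\det\Lambda}+O\!\left(1+\frac{\mathrm{diam}\,\mathcal R}{\lambda_1(\Lambda)}\right),
\]
and sum over $g$ and the M\"obius variables. The main terms should factor as an Euler product reproducing $\sigma_\infty B\prod_p\sigma_p$ after the change of variables from $(s,t)$ back to $\mathbf{x}$, with the prefactor $\tfrac{1}{2}$ accounting for the $\pm(s,t)$ symmetry; this is the parametric counterpart of the local density calculation of~\cite{HB'}.

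The principal obstacle is the error analysis uniformly in $Q$. Two ingredients are needed: a lower bound for $\lambda_1(\Lambda_{g,k})$ producing both the $|\Delta_Q|^{1/4}$ shape (via Minkowski applied to the pair $(L,Q^*)$, using the resultant relation above) and the $\delta_Q^{1/2}$ shape (via the gcd of the $2\times 2$ minors of $Q$); and a careful interchange of the M\"obius sums with the sum over $g$ that preserves only polynomial dependence on $\langle Q\rangle$ and logarithmic dependence on $BK_0$. The factor $|\Delta_Q|+\langle Q\rangle$ will then emerge from the effective range $g\ll\langle Q\rangle^{O(1)}$ of relevant divisors, together with the treatment of the degenerate contributions where either $L$ or $Q^*$ has abnormally small content.
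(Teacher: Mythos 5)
Your overall strategy---parametrising $C$ by the lines through $(0,1,0)$, so that primitive zeros correspond to coprime pairs $(s,t)$ via $(s,t)\mapsto(sL,-Q^*,tL)$ divided by $g=\gcd$, followed by M\"obius inversions and a lattice-point count---is the same as the paper's. But there is a concrete gap at the central counting step: the set $\Lambda_{g,k}=\{(s,t)\in k\mathbb{Z}^2: g\mid L(s,t),\ g\mid Q^*(s,t)\}$ is in general \emph{not} a lattice, because the condition $g\mid Q^*(s,t)$ is quadratic and not stable under addition (for instance, if $p\mid\gcd(b,e)$ and $Q^*$ splits into two distinct linear factors mod $p$, the solution set mod $p$ is a union of two lines through the origin). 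Hence the estimate $\#(\mathcal R\cap\Lambda)=\mathrm{vol}\,\mathcal R/\det\Lambda+O\left(1+\mathrm{diam}\,\mathcal R/\lambda_1(\Lambda)\right)$ cannot be applied as written, and the ``covolume read off from the local behaviour of $(L,Q^*)$ mod $g$'' does not exist. The repair is to split into the $\rho^*$-many residue classes modulo $gk$, each a translate of $(gk\mathbb{Z})^2$, and to count each class by Davenport's Lipschitz principle---which is what the paper does---but then the whole burden of uniformity falls on bounding the \emph{number} of classes: one needs that $\rho^*$ is supported on divisors of $\Delta_Q/\gcd(b,e)$ and satisfies $\rho^*(n)\le n\gcd(b,e)$, together with $\gcd(b,e)^2\mid\Delta_Q$ and $\gcd(b,e)\mid\delta_Q$. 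That, and not a Minkowski lower bound for $\lambda_1$, is where the factor $\min\{|\Delta_Q|^{1/4},\delta_Q^{1/2}\}$ actually comes from; your error analysis is only announced (``two ingredients are needed''), not carried out, so the stated error term is not established. Incidentally, your geometric claims are also off: the area of $\{\|\mathbf{q}(s,t)\|\le T\}$ is $\mathrm{vol}(V)\,T$ with $\mathrm{vol}(V)\ll\langle Q\rangle^2K_0/|\Delta_Q|$, not $\asymp T$ uniformly, and the region need not be convex (it is only a union of boundedly many convex pieces, which is all one needs for the Lipschitz argument).

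A second genuine omission concerns the leading constant. You assert that the main term ``reproduces $\sigma_\infty B\prod_p\sigma_p$ after the change of variables.'' What the parametrisation produces naturally is $\mathrm{vol}(V)\prod_p\sigma_p'$, with local factors built from $\rho^*$, and identifying this with the Hardy--Littlewood/Peyre constant $\frac12\sigma_\infty\prod_p\sigma_p$ is a non-trivial step: the paper either indicates a lengthy elementary argument (parametrising modulo $p^n$) or invokes the equidistribution results of Franke--Manin--Tschinkel as interpreted by Peyre. Without some such argument, your proof yields an asymptotic with an unidentified constant rather than with $c_Q$ as claimed in the Proposition.
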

Let $\Pi$ be the matrix
$$
\Pi:=
 \begin{pmatrix}
 b   & e  &  0\\
  -a  & -d  &  -f  \\
 0 &  b  &  e
 \end{pmatrix}
$$
and define the three
binary quadratic forms $q_1,q_2,q_3$
such that
\begin{equation}
\label{nib}
\mathbf{q}\left(s,t\right)=
\Pi
\begin{pmatrix}
  s^2 \\
  st \\
  t^2
 \end{pmatrix}
\end{equation}
where $\mathbf{q}=\left(q_1,q_2,q_3\right)^{T}.$
One can verify that  
$\text{Det}\left(\Pi\right) = \Delta_Q$
and that in particular the matrix $\Pi$
is invertible. Hence one gets
\begin{equation}
\label{adj}
\mathrm{adj}\left(\Pi\right) \mathbf{q}\left(s,t\right)= \Delta_Q
\begin{pmatrix}
  s^2 \\
  s t \\
  t^2
 \end{pmatrix}.
\end{equation}
Notice that for
\begin{equation}
\label{el}
\begin{aligned}
g\left(s,t\right)&:=as^2+dst+ft^2, \\
L\left(s,t\right)&:=b s+e t, 
\end{aligned}
\end{equation}
one has 
\begin{equation}
\label{g}
\begin{aligned}
q_1\left(s,t\right)&=s L\left(s,t\right), \\
q_2\left(s,t\right)&=-g\left(s,t\right), \\
q_3\left(s,t\right)&=t L\left(s,t\right).
\end{aligned}
\end{equation}
For each integer $n,$ let
\begin{equation}
\label{ro}
\rho^*\left(n\right):=\#\{\left(s,t\right) \in 
[0,n)^2
:n|\mathbf{q}\left(s,t\right),~\gcd(s,t,n)=1\},
\end{equation}
and note that $\rho^*$ is a multiplicative function.
Equations \eqref{g} imply that
this 
expression equals
$$\rho^*\left(n\right)=\#\{\left(s,t\right) \in [0,n)^2
:n|\left(L\left(s,t\right),g\left(s,t\right)\right),~\gcd(s,t,n)=1\}.$$
\begin{lem}
\label{lem1}
\begin{enumerate}[\upshape (i)]
\item The function $\rho^*$ 
is supported on the 
divisors of $\frac{\Delta_Q}{\gcd(b,e)} .$ \label{lemmon}
\item For all integers $n$
we have 
$$
\rho^*\left(n\right)\leq n \gcd(b,e).$$
\end{enumerate}
\end{lem}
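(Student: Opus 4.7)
My approach is to reduce both parts to prime powers $n = p^k$ via the multiplicativity of $\rho^*$. I fix a prime $p$, set $m := v_p(\gcd(b,e))$, and record the trivial estimate $v_p(\Delta_Q) \geq 2m$, which follows at once from $\Delta_Q = ae^2 - deb + fb^2$ together with $v_p(b), v_p(e) \geq m$.

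For part (i), suppose $\rho^*(p^k) > 0$ and pick a contributing pair $(s,t)$, so that $\gcd(s,t,p) = 1$, $p^k \mid sL(s,t)$, $p^k \mid tL(s,t)$ and $p^k \mid g(s,t)$. By symmetry I may assume $p \nmid s$, which forces $p^k \mid L(s,t)$. The plan is then to read off the first coordinate of \eqref{adj} and substitute \eqref{g}, which after a brief expansion yields the identity
$$
s^2 \, \Delta_Q \;=\; e^2 \, g(s,t) \;+\; L(s,t) \cdot \bigl[(fb - de)s - eft\bigr].
$$
Every coefficient in the bracket has $p$-adic valuation at least $m$, so the bracket itself does; paired with $v_p(L(s,t)) \geq k$ and $v_p(e^2 g(s,t)) \geq 2m + k$, this forces $v_p(s^2 \Delta_Q) \geq k + m$, and $p \nmid s$ gives $v_p(\Delta_Q) \geq k + v_p(\gcd(b,e))$. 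Multiplicativity then delivers $n \mid \Delta_Q / \gcd(b,e)$. The case $p \nmid t$ is handled symmetrically using the third row of $\mathrm{adj}(\Pi)$, which produces the analogous identity for $t^2 \Delta_Q$.

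For part (ii), I would drop the condition $p^k \mid g(s,t)$ (which only enlarges the count) and bound
$$
A \;:=\; \#\bigl\{(s,t) \in [0,p^k)^2 : p^k \mid L(s,t)\bigr\}.
$$
If $k \leq m$ the condition on $L$ is automatic and $A \leq p^{2k} \leq p^{k+m}$. If $k > m$, by the symmetry in $s$ and $t$ I may assume $v_p(b) = m$; writing $b = p^m b^{\#}$ with $p \nmid b^{\#}$, the congruence $p^{k-m} \mid b^{\#} s + (e/p^m) t$ determines $s$ modulo $p^{k-m}$ for each given $t$, so $A \leq p^k \cdot p^m = p^{k+m}$. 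Multiplicativity then yields $\rho^*(n) \leq n \gcd(b,e)$.

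The hard part is the sharpening by $\gcd(b,e)$ in (i): the bare adjugate relation \eqref{adj} only produces $p^k \mid \Delta_Q$, and the extra factor $p^m$ must be extracted from the observation that every entry of the first row of $\mathrm{adj}(\Pi)$, namely $fb - de$, $-e^2$ and $-ef$, is itself divisible by $p^m$. Without exploiting this, the identity above merely reproduces the weaker divisibility $p^k \mid \Delta_Q$.
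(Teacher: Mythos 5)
Your proof is correct, and for part (i) it takes a route that differs from the paper's in an interesting way, while part (ii) is essentially the paper's argument (reduce to prime powers, note the condition is vacuous for $k\leq m$, and for $k>m$ solve the congruence $p^{k-m}\mid b^{\#}s+(e/p^{m})t$ to determine $s$ modulo $p^{k-m}$ for each $t$). For part (i) the paper also works prime by prime, but instead of an identity it normalises with $v_p(b)\leq v_p(e)$, splits off the trivial case $\nu\leq v_p(b)$, solves the congruence $L(s,t)\equiv 0 \bmod{p^{\nu}}$ for $b's$ in terms of $t$ (deducing $p\nmid t$), and substitutes into $g$ to land on $p^{\nu+v_p(b)}\mid \Delta_Q$; your version instead reads off the B\'ezout-type identity $s^{2}\Delta_Q=e^{2}g(s,t)+L(s,t)\bigl[(fb-de)s-eft\bigr]$ (and its counterpart for $t^{2}\Delta_Q$) from the rows of $\mathrm{adj}(\Pi)$ in \eqref{adj} — I checked the identity, it is exact — and extracts the extra factor $p^{m}$ from the divisibility of the adjugate entries by $\gcd(b,e)$. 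Your identity-based argument makes the source of the sharpening by $\gcd(b,e)$ completely transparent, avoids the normalisation $v_p(b)\leq v_p(e)$ and the sub-case split on $\nu$ versus $v_p(b)$, at the cost of a case split on $p\nmid s$ versus $p\nmid t$ (which is legitimate, since $\gcd(s,t,p)=1$ and the swap $s\leftrightarrow t$, $b\leftrightarrow e$, $a\leftrightarrow f$ preserves $L$, $g$ and $\Delta_Q$); the paper's substitution argument has the advantage that the same congruence manipulation is then recycled verbatim to count solutions in part (ii).
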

\begin{proof} (i) 
It suffices to show that 
for each prime $p$ and integer $\nu\geq 1$
with $\rho^*(p^\nu)\neq 0$ we have that 
$$\nu+\min\{v_p(b),v_p(e)\}\leq v_p(\Delta_Q).$$
Let $(s,t)$ be counted by
$\rho^*(p^\nu)$.
We may assume without loss of generality that $v_p\left(b\right) \leq v_p\left(e\right).$ 
Since $\gcd(b,e)^2|\Delta_Q$ our claim in the case
$\nu\leq v_p(b)$ 
is trivial.
If $\nu>v_p(b)$ then
we may write
$b=p^{v_p\left(b\right)}b',e=p^{v_p\left(e\right)}e'$ 
with $p\nmid b'e'.$
Plugging these values
in the congruence $L\left(s,t\right)\equiv 0 \mod{p^{\nu}} $
yields 
\begin{equation}
\label{something}
b's \equiv
-p^{v_p\left(e\right)-v_p\left(b\right)}
e't \mod{p^{\nu-v_p\left(b\right)}}
\end{equation}
and hence 
$p\nmid t$
since otherwise we would have
$p|(s,t)$
which would contradict the definition
of $\rho^*\left(p^n\right)$. 
We deduce that 
$$t^2\left(
ae^2
p^{-2v_p\left(b\right)}
-de
b'
p^{-v_p\left(b\right)}
+f{b'}^2\right)
\equiv
{b'}^2
g(s,t) \equiv
0
\mod{p^{\nu-v_p\left(b\right)}}
$$
and therefore 
$p^{\nu+v_p(b)}
|ae^2
-deb+fb^2
=\Delta_Q
$
which concludes the proof of the first part.

(ii) It suffices to prove that
for all primes $p$
and integers $\nu\geq 1$
we have 
\begin{equation}
\label{aux}
\frac{\rho^*\left(p^\nu\right)}{p^{\nu}} \leq 
p^{\min\{v_p\left(b\right),v_p\left(e\right)\}}.
\end{equation}
Let $\left(s,t\right)$ be counted by $\rho^*\left(p^\nu\right).$
We may assume as previously that we have
$v_p\left(b\right) \leq v_p\left(e\right)$. 
In the case that $\nu\leq v_p\left(b\right)$, then \eqref{aux} is a consequence of the trivial bound $\rho^*\left(p^\nu\right)\leq p^{2\nu}$.
In the opposite case
we proceed as in the proof of part~(i).
Then equation~\eqref{something}
shows that
the value of $ s / t \mod{p^{\nu-v_p\left(b\right)}}$ is uniquely determined 
and can be lifted to at most $p^{v_p\left(b\right)}$ values 
$\mod{p^\nu},$ which proves \eqref{aux} in all cases.
\end{proof}
We record a generalisation of
M\"{o}bius inversion that will be used later.
\begin{lem}
\label{lem2} 
Let $\mathcal{A}$ be a finite subset of $\mathbb{Z}^2$
and $n$ a fixed integer. Then 
\begin{align*}  &\#\{\left(s,t\right) \in \mathcal{A}:\gcd(s,t)=1\}\\
&=\sum_{\substack{ m=1 \\ \gcd(m,n)=1}}^{\infty}\mu\left(m\right)
\#
\left\{\left(s,t\right) \in \mathcal{A}:
\begin{array}{l}
\gcd(s,t,n)=1,\\
m|s,m|t
\end{array}
\right\}.
\end{align*} 
\end{lem}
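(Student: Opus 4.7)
The plan is to start from the right-hand side and show, by swapping the order of summation, that it reduces to the standard Möbius characterisation of coprimality. Denoting the right-hand side by $S$, I would rewrite
$$
S=\sum_{(s,t)\in\mathcal{A},\,\gcd(s,t,n)=1}\ \sum_{\substack{m\mid \gcd(s,t)\\\gcd(m,n)=1}}\mu(m),
$$
which is permissible since $\mathcal{A}$ is finite and all sums involved are absolutely convergent.

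The next step is the key observation: if $\gcd(s,t,n)=1$, then $\gcd(\gcd(s,t),n)=1$, so every divisor $m$ of $\gcd(s,t)$ is automatically coprime to $n$. Hence the condition $\gcd(m,n)=1$ in the inner sum is redundant, and the inner sum collapses to $\sum_{m\mid \gcd(s,t)}\mu(m)$, which equals $1$ if $\gcd(s,t)=1$ and vanishes otherwise. Therefore
$$
S=\#\{(s,t)\in\mathcal{A}:\gcd(s,t,n)=1,\ \gcd(s,t)=1\}.
$$

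Finally, since $\gcd(s,t)=1$ trivially forces $\gcd(s,t,n)=1$, the first condition is implied by the second, and we obtain
$$
S=\#\{(s,t)\in\mathcal{A}:\gcd(s,t)=1\},
$$
which is the left-hand side of the claimed identity. There is no genuine obstacle: the only thing one must be careful about is the interplay between the two conditions $\gcd(m,n)=1$ and $\gcd(s,t,n)=1$, both of which turn out to be harmless because one is automatic given the other via the divisibility relation $m\mid \gcd(s,t)$.
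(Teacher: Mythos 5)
Your proof is correct and is essentially the paper's argument: both rest on M\"obius inversion via $\sum_{m\mid\gcd(s,t)}\mu(m)$ together with the observation that, under $\gcd(s,t,n)=1$, any common divisor $m$ of $s$ and $t$ is automatically coprime to $n$, so the constraint $\gcd(m,n)=1$ costs nothing. You merely run the computation from the right-hand side to the left, whereas the paper inverts on the left and then discards the vanishing terms with $\gcd(m,n)>1$; the content is the same.
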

\begin{proof}
Define $\mathds{1}_{\mathcal{A}}:\mathbb{Z}^2 \to \{0,1\}$ as the 
indicator function of $\mathcal{A}.$ 
M\"obius inversion gives
$$\sum_{\substack{\gcd(s,t,n)=1\\ \gcd(s,t)=1}}
\mathds{1}_{\mathcal{A}}\left(s,t\right)=
\sum_{m=1}^{\infty} \mu\left(m\right)
\sum_{\substack{\gcd(s,t,n)=1\\ m|s, m|t}}
\mathds{1}_{\mathcal{A}}(s,t).
$$
Our assertion is proved upon
noticing that only $m$ coprime to $n$
are taken into account in the summation.
\end{proof}

\section{Parametrisation of the conic}
\label{3s}
In this section, we begin by showing how
the problem of counting points on conics can be rephrased 
using the parametrisation functions $\mathbf{q}\left(s,t\right).$
This will lead us to count primitive integer points in regions
of $\mathbb{R}^2.$ 

Let
\begin{equation}
\label{cal}\mathcal{N}\left(Q,B\right) :=\#
\left\{\left(s,t\right) \in \mathbb{Z}_{\text{prim}}^2:t>0,
 \|\mathbf{q}\left(s,t\right)\| \leq \lambda B
\right\},
\end{equation}
where $\l=\gcd(\mathbf{q}\left(s,t\right)) \in \mathbb{Z}.$
\begin{lem}
\label{lem3} 
One has
$N\left(Q,B\right)=\mathcal{N}\left(Q,B\right)+O\left(1\right),$
where the implied constant is absolute.
\end{lem}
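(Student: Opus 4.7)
The plan is to establish a near-bijection between the two sets counted by $N(Q,B)$ and $\mathcal{N}(Q,B)$, with only finitely many exceptions independent of $B$, accounting for the $O(1)$ discrepancy. Geometrically, $\mathbf{q}$ is the birational inverse of projection from the rational point $(0:1:0) \in C$: a generic line through $(0:1:0)$ meets $C$ in one further point, and $(s:t)$ parametrises the slope of that line.

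For the forward direction, I would show that for any $(s,t) \in \mathbb{Z}^2_{\text{prim}}$ with $t>0$, the vector $\mathbf{x} := \mathbf{q}(s,t)/\lambda$ lies in $\mathbb{Z}^3_{\text{prim}}$ and satisfies $Q(\mathbf{x})=0$. Primitivity holds by the very definition of $\lambda$, and the vanishing $Q(\mathbf{x})=0$ follows by homogeneity from the identity $Q(\mathbf{q}(s,t))\equiv 0$, which one verifies directly from~\eqref{g}: the cross-terms collapse as $L^{2}g - L\cdot g\cdot L=0$. Since $\lambda>0$ and $\|\cdot\|$ is absolutely homogeneous, the height conditions correspond: $\|\mathbf{q}(s,t)\|\leq\lambda B$ if and only if $\|\mathbf{x}\|\leq B$.

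For the reverse direction, the relations $q_1=sL$ and $q_3=tL$ give $(s:t)=(x_1:x_3)$ in $\mathbb{P}^{1}$, so that from any primitive zero $\mathbf{x}$ with $(x_1,x_3)\neq (0,0)$ one recovers a primitive pair $(s,t)=(x_1,x_3)/\gcd(x_1,x_3)$, with the sign chosen so that $t>0$. One then checks that $\mathbf{q}(s,t)/\gcd(\mathbf{q}(s,t))$ returns the original $\mathbf{x}$, so that the two maps are mutually inverse on the generic part.

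The residual cases are absorbed into the $O(1)$ term: the base point $(0,\pm 1,0)$, the primitive pairs $(s,t)$ satisfying $L(s,t)=bs+et=0$ (for which $\mathbf{q}$ degenerates to a scalar multiple of $(0,1,0)$ so that no primitive $\mathbf{x}$ with $(x_1,x_3)\neq(0,0)$ is produced), and the primitive zeros with $x_1=x_3=0$; each of these three sets has absolutely bounded cardinality. The main obstacle lies in tracing the divisibilities carefully enough to confirm that dividing $\mathbf{q}(s,t)$ by $\lambda$ truly recovers $\mathbf{x}$ itself rather than merely a scalar multiple, so that the forward and inverse maps compose to the identity on the generic part.
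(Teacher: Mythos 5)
Your overall route is the same as the paper's: parametrise the conic by the pencil of rational lines through $(0,1,0)$, pass between primitive pairs $(s,t)$ and primitive zeros via $\mathbf{q}$, and absorb the degenerate configurations into the $O(1)$ term. The forward half of your argument is sound: $Q(\mathbf{q}(s,t))=L^{2}g-gL\cdot L=0$, the vector $\mathbf{q}(s,t)/\lambda$ is primitive by the definition of $\lambda$, and the height conditions correspond by homogeneity.

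The genuine gap sits exactly at the point you flag at the end, and it is a sign issue, not a divisibility issue. Your reverse map sends both $\mathbf{x}$ and $-\mathbf{x}$ --- which are distinct vectors, both counted by $N(Q,B)$ --- to the same normalised pair $(s,t)$ with $t>0$, while the forward map returns the single vector $\mathbf{q}(s,t)/\lambda$, which can equal only one of $\pm\mathbf{x}$. Passing to the other representative $(-s,-t)$ of the line does not produce the other lift, because $\mathbf{q}$ is homogeneous of degree two, so $\mathbf{q}(-s,-t)=\mathbf{q}(s,t)$. Hence the two maps you describe are not mutually inverse on the generic part: the correspondence from zeros to pairs is generically two-to-one, the composite zero $\to$ pair $\to$ zero recovers $\mathbf{x}$ only up to sign, and no amount of tracing gcd's can repair a sign obstruction. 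The paper's own proof does not assert that $\mathbf{q}(s,t)/\lambda$ returns a prescribed lift; it works projectively, setting up the bijection between $U(\mathbb{Q})$ and the rational lines $\mathcal{U}(\mathbb{Q})$ through $\xi$, recording that the primitive lifts of the second intersection point are $\pm\,\mathbf{q}(s,t)/\lambda$, and disposing of the two signs via the correspondence between lines with $t>0$ and $t<0$. To close your argument you must build this $\pm$ bookkeeping into the correspondence (or count projective points rather than vectors on one side); as written, the claimed near-bijection between the two counted sets does not hold. A smaller omission: primitive zeros with $x_3=0$ and $x_1\neq 0$ correspond to $t=0$ and so have no preimage with $t>0$; they are $O(1)$ in number, but they belong on your list of residual cases alongside the base point and the pairs with $L(s,t)=0$.
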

\begin{proof} 
Let $C\subset \mathbb{P}^2$ be the curve given by $Q=0$
and denote the point $\left(0,1,0\right)$ of $C$ by
$\xi.$ The tangent line to $C$ through
$\xi,$
is given by
$$L_{\xi}:=\{
ez=bx\}
.$$
Let $\mathcal{L}$ be the set of projective lines 
in $\mathbb{P}^2$ that pass through
$\xi$ and 
$\mathcal{L}\left(\mathbb{Q}\right)$ be the corresponding 
subset of lines that are defined over $\mathbb{Q}.$
Define $U \subset C$ as the open subset formed by deleting
$\xi$ from $C.$ Letting $\mathcal{U}:=
\mathcal{L}\setminus \{L_{\xi}\},$ we note that
the sets $U\left(\mathbb{Q}\right)$ and $\mathcal{U}\left(\mathbb{Q}\right)$
are in bijection.

The general element of $\mathcal{L}\left(\mathbb{Q}\right)$
is given by
$$
L_{s,t}:=\{sz=tx\}
$$ 
for integer pairs $\left(s,t\right)$
such that 
$\gcd(s,t)=1.$ 
The condition
$\left(s,t\right) \neq \frac{\left(b,e\right)}{\gcd(b,e)}$
ensures that 
we have a point in $\mathcal{U}\left(\mathbb{Q}\right).$
One can ignore this, since
the contribution of 
such $s,t$ is $O\left(1\right).$ The bijection between lines with 
$t>0$ and $t<0$ allows us to consider the contribution coming from the former. The contribution of pairs $\left(s,t\right)$ with $t=0$ 
is $O\left(1\right)$ due to the condition $\gcd(s,t)=1$.

One can make explicit the bijection between
$U\left(\mathbb{Q}\right)$
and 
$\mathcal{U}\left(\mathbb{Q}\right)$
as follows.
Recall the definition of
$L,g$ in \eqref{el}.
A computation reveals that
the line $L_{s,t}$ intersects $C$
in the point $(x,y,z)$
if and only we have
 $zg\left(s,t\right)+ytL\left(s,t\right)=0$
or
$z=0$ holds.
In the latter case, one gets the point
$\xi,$
which is to be ignored.
In the former case, we have
$$-g\left(s,t\right)xt=-g\left(s,t\right)sz=syL\left(s,t\right)t,$$
by the equation for $L_{s,t}.$ 
The primitive integer vectors $\left(x,y,z\right)$
represent a point 
in $C\left(\mathbb{Q}\right)$ 
if and only if
$$\left(x,y,z\right)=\pm\left(
sL\left(s,t\right)/\l,
-g\left(s,t\right)/\l,
tL\left(s,t\right)/\l
\right),$$
where $\l=\gcd(sL\left(s,t\right),-g\left(s,t\right),tL\left(s,t\right)).$
Making use of \eqref{g} concludes the proof of the lemma.
\end{proof}

Let us define
for any 
$T \in \mathbb{R}_{\geq 1}$ and $n,\sigma,\tau \in \mathbb{N},$
\begin{equation}
\label{0star}
M^*_{\sigma,\tau}\left(T,n\right):=
\#
\left\{\left(s,t\right) \in \mathbb{Z}_{\text{prim}}^2:
\begin{array}{l}\left(s,t\right) 
\equiv \left(\sigma,\tau\right) \mod n,\\
t>0,~\|\mathbf{q}\left(s,t\right)\|\leq T
\end{array}
\right\}.
\end{equation}
\begin{lem}
\label{lem4}
One has$$
\mathcal{N}\left(Q,B\right)=
\sum_{k\l 
|
\Delta_Q / \gcd(b,e)} 
\mu\left(k\right)
\Osum_{\substack{\left(\sigma,\tau\right) \mod{k\l} \\ k\l|\left(L\left(\sigma,\tau\right),g\left(\sigma,\tau\right)\right) }}
\hspace{0.0001cm}
M^*_{\sigma,\tau}\left(B\l,k\l\right).
$$
\end{lem}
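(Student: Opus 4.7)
The plan is to decompose $\mathcal{N}(Q,B)$ according to the value of $\lambda = \gcd(\mathbf{q}(s,t))$, apply M\"{o}bius inversion to trade the equality $\gcd = \lambda$ for divisibilities $k\lambda \mid \mathbf{q}$, and finally sort the resulting primitive pairs by their residue classes modulo $k\lambda$.

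First, I would write
$$
\mathcal{N}(Q,B) = \sum_{\lambda\geq 1} \#\Big\{(s,t)\in\mathbb{Z}^2_{\text{prim}} : t>0,\ \gcd(\mathbf{q}(s,t))=\lambda,\ \|\mathbf{q}(s,t)\|\leq \lambda B\Big\}.
$$
For any primitive $(s,t)$ with $\lambda\mid \mathbf{q}(s,t)$, reducing $(s,t)$ mod $\lambda$ gives a point counted by $\rho^*(\lambda)$, so Lemma~\ref{lem1}(i) lets me restrict to $\lambda\mid \Delta_Q/\gcd(b,e)$. Next, given that $\lambda\mid\mathbf{q}(s,t)$, the identity
$$
\mathbf{1}\bigl[\gcd(\mathbf{q}(s,t))=\lambda\bigr]=\sum_{k\geq 1,\ k\lambda\mid\mathbf{q}(s,t)}\mu(k)
$$
is standard M\"obius inversion applied to the primitivity of $\mathbf{q}(s,t)/\lambda$. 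Interchanging the order of summation and invoking Lemma~\ref{lem1}(i) once more (now for the modulus $k\lambda$) restricts the sum to $k\lambda\mid \Delta_Q/\gcd(b,e)$.

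At this point I would parametrize the inner count by the residue class of $(s,t)$ modulo $k\lambda$. The relations~\eqref{g} give $q_1=sL$, $q_2=-g$, $q_3=tL$, so for a pair satisfying $\gcd(s,t,k\lambda)=1$ the divisibility $k\lambda\mid\mathbf{q}(s,t)$ is equivalent to $k\lambda\mid L(s,t)$ and $k\lambda\mid g(s,t)$: indeed, for each prime power $p^\nu\| k\lambda$, primitivity forces $p\nmid s$ or $p\nmid t$, and then $p^\nu\mid sL$ (or $p^\nu\mid tL$) forces $p^\nu\mid L$. Global primitivity $\gcd(s,t)=1$ certainly implies $\gcd(s,t,k\lambda)=1$, and only the latter depends on the residue class. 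Grouping pairs by their residue $(\sigma,\tau)\!\!\mod{k\lambda}$, the inner count becomes precisely $M^*_{\sigma,\tau}(B\lambda,k\lambda)$ summed over those $(\sigma,\tau)$ with $\gcd(\sigma,\tau,k\lambda)=1$ and $k\lambda\mid (L(\sigma,\tau),g(\sigma,\tau))$, which is exactly the $\Osum$ in the statement.

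The main point that requires care is the equivalence in the third paragraph: the divisibility of $\mathbf{q}$ by $k\lambda$ translates to a joint divisibility of $L$ and $g$ only after imposing primitivity modulo $k\lambda$. This step is essential because it allows the summation condition to be phrased purely in terms of $L$ and $g$, matching the defining congruences for $\rho^*$ in \eqref{ro} and thereby making the formula compatible with the multiplicative machinery developed in Lemma~\ref{lem1}. Everything else in the argument is a bookkeeping exercise in swapping sums and packaging terms.
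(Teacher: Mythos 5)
Your proof is correct and follows essentially the same route as the paper: decompose by $\l=\gcd(\mathbf{q}(s,t))$, restrict $\l$ and then $k\l$ to divisors of $\Delta_Q/\gcd(b,e)$ via Lemma~\ref{lem1}(i), apply M\"obius inversion to detect $\gcd(\mathbf{q}(s,t)/\l)=1$, and partition into residue classes modulo $k\l$ using the equivalence of $k\l\mid\mathbf{q}(s,t)$ with $k\l\mid(L(s,t),g(s,t))$ under $\gcd(s,t,k\l)=1$, which is exactly the rephrasing via \eqref{g} used in the paper. No gaps.
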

\begin{proof} 
Any integer $\lambda$ that appears in \eqref{cal}, satisfies $\lambda|\mathbf{q}\left(s,t\right)$ 
for some coprime integers $s,t$, so part $\left(i\right)$ of
Lemma~\ref{lem1} implies that 
$\l| \frac{\Delta_Q}{\gcd(b,e)}.$ We therefore
get
\begin{align*}
\mathcal{N}\left(Q,B\right)&=
\sum_{\l
|
\Delta_Q / \gcd(b,e)
}
\#
\left\{\left(s,t\right) \in \mathbb{Z}^2_{\text{prim}}:
\begin{array}{l}
\l | \mathbf{q}\left(s,t\right),~\gcd(\frac{\mathbf{q}\left(s,t\right)}{\lambda})=1,\\
t>0,~\|\mathbf{q}\left(s,t\right)\|\leq B \lambda
\end{array}
\right\}.
\end{align*}
Using Lemma~\ref{lem2} with $n=1,$ gives
\begin{equation}
\label{ena}
\mathcal{N}\left(Q,B\right)=\sum_{k \lambda| 
\Delta_Q / \gcd(b,e)}  \mu\left(k\right) M^{\ast}\left(B\lambda,k\lambda\right),
\end{equation}
where for any $T\geq 1, n \in \mathbb{N},$ 
we have defined
$$
M^{\ast}\left(T,n\right):=
\#
\left\{\left(s,t\right) \in \mathbb{Z}^2_{\text{prim}}:
\begin{array}{l}
 n|\mathbf{q}\left(s,t\right),~t>0, \\
\|\mathbf{q}\left(s,t\right)\|\leq T
\end{array}
\right\}.
$$  Partitioning into congruence classes$\mod n$ yields
\[
M^{\ast}\left(T,n\right)=\Osum_{\substack{\left(\sigma,\tau\right) \mod{n} \\ n|\left(L\left(\sigma,\tau\right),g\left(\sigma,\tau\right)\right)}}
\hspace{0.0001cm}
M^*_{\sigma,\tau}\left(T,n\right),
\]
which, when used along with \eqref{ena},
yields the proof of the lemma.
\end{proof}

\section{Counting lattice points}
\label{4s}
The quantity appearing in \eqref{0star}
involves integer points $\left(s,t\right)$ which are primitive. 
We will use M\"{o}bius inversion to deal with this condition. This will
lead us to count
integer points in a dilated
region.
In order to do so, one needs 
certain information regarding this region, 
which is the purpose of the next lemma.

Recall the definition \eqref{nib}. 
Denote by $V$ the region
\begin{equation}
\label{psy00}
V:=\{\left(s,t\right) \in \mathbb{R}^2:t>0, \|\mathbf{q}\left(s,t\right)\| \leq 1\}.
\end{equation}

\begin{lem}
\label{lem5} 
$V$ is bounded and in particular, 
it is contained in the rectangle given by 
$$
|s|,|t| \ll {\langle Q \rangle} \left(\frac{K_0}{|\Delta_Q|}\right)^{\frac{1}{2}}.
$$
The length of the boundary 
of $V$, denoted by $|\partial V|$,
satisfies  
$$
|\partial V|
\ll {\langle Q \rangle} \left(\frac{K_0}{|\Delta_Q|}\right)^{\frac{1}{2}}
\! ,
$$
where the implied constant is absolute.
Furthermore
any line parallel to one of the $2$ coordinate axes intersects $V$ in a set of points which, if not empty, consists of at most $O(1)$ intervals, where the implied constant is absolute.

\end{lem}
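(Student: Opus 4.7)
My plan is to dispatch the three assertions in turn, in each case exploiting the fact that the norm has an isometric representative $\|\mathbf{x}\| = \|\mathfrak{g}\mathbf{x}\|_\infty$ for some $\mathfrak{g}\in GL_3(\mathbb{R})$, together with the identity~\eqref{adj}.

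For the bounding rectangle: if $(s,t)\in V$ then $\|\mathbf{q}(s,t)\|_\infty \le K_0\,\|\mathbf{q}(s,t)\|\le K_0$ by~\eqref{psy}. The entries of $\operatorname{adj}(\Pi)$ are $2\times 2$ minors of $\Pi$, so each is $O(\langle Q\rangle^2)$. Applying~\eqref{adj} componentwise I then obtain $|\Delta_Q|\,s^2,\,|\Delta_Q|\,t^2\ll \langle Q\rangle^2 K_0$, which supplies the claimed rectangle bound.

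For the structure of a horizontal (resp.\ vertical) slice: fix $t_0>0$ and observe that
$$V\cap \{t=t_0\}=\{s\in \mathbb{R} : \|\mathfrak{g}\,\mathbf{q}(s,t_0)\|_\infty\le 1\}.$$
Each component of $\mathfrak{g}\,\mathbf{q}(s,t_0)$ is a polynomial in $s$ of degree at most $2$, so each condition $|(\mathfrak{g}\,\mathbf{q}(s,t_0))_i|\le 1$ carves out a union of at most two intervals, and intersecting the three gives $O(1)$ intervals. Vertical slices are handled identically.

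For the length of $\partial V$: writing $\mathbf{r}(s,t):=\mathfrak{g}\,\mathbf{q}(s,t)$, the boundary of $V$ is contained in the union of the six conics $\Gamma_i^\pm:=\{r_i(s,t)=\pm 1\}$ together with a subinterval of the $s$-axis. By the rectangle bound the relevant portion of each $\Gamma_i^\pm$ lies in a square of side $L_Q\ll \langle Q\rangle (K_0/|\Delta_Q|)^{1/2}$. The key input is that any real conic $F(s,t)=0$ decomposes into $O(1)$ arcs which are monotone in both coordinate directions: the points of horizontal tangency satisfy $F=\partial_t F=0$, a polynomial system of degrees $2$ and $1$ with at most two real solutions, and vertical tangencies are bounded symmetrically. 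On each such arc, parametrised as $t=\varphi(s)$, the elementary inequality $\sqrt{1+\varphi'(s)^2}\le 1+|\varphi'(s)|$ gives arc length inside the box at most $2L_Q$. Summing over the $O(1)$ arcs yields $|\partial V|\ll L_Q$, as claimed. The main obstacle is precisely this last estimate: assertions (i) and (iii) reduce immediately to~\eqref{adj} and the quadratic degree of $\mathbf{q}$, but (ii) requires upgrading the qualitative ``finite union of conic arcs'' statement into a length bound linear in $L_Q$, via the monotone-arc decomposition above.
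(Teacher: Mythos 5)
Your proof is correct, and while the rectangle bound is obtained exactly as in the paper (apply \eqref{adj} componentwise, bound the entries of $\mathrm{adj}(\Pi)$ by $O(\langle Q\rangle^2)$ and use \eqref{psy}), your treatment of the other two assertions takes a genuinely different route. For the slice structure, the paper simply observes that $V$ is semi--algebraic (since $\|.\|$ is isometric to the supremum norm) and cites Davenport's corrigendum \cite{daven2}; you instead argue directly that each condition $|(\mathfrak{g}\,\mathbf{q}(s,t_0))_i|\le 1$ is a quadratic inequality in one variable, hence cuts out at most two intervals, and intersect the three conditions --- an elementary, self-contained replacement for the citation. For the boundary length, the paper notes that $V$ is the intersection of the interiors of three plane conic sections, hence a union of $O(1)$ convex sets, and bounds the boundary of each convex piece by the perimeter of the bounding box; you instead cover $\partial V$ by the six level curves $r_i=\pm 1$ (plus a segment of the $s$-axis), split each into $O(1)$ arcs monotone in both coordinates via the tangency systems $F=\partial_t F=0$ and $F=\partial_s F=0$, and bound each monotone arc inside the box by $2L_Q$ using $\sqrt{1+\varphi'^2}\le 1+|\varphi'|$. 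Both arguments are sound; the paper's is shorter but leans on convexity and an external reference, while yours is more hands-on and transfers to more general semi--algebraic boundaries. The only points you should make explicit are the degenerate cases: the quadratic form $r_i$ may be degenerate, so the level set $r_i=\pm 1$ can be a pair of parallel lines (or empty), and a monotone arc may be a vertical segment not expressible as a graph $t=\varphi(s)$; in both situations the length inside the box is trivially $O(L_Q)$, so the conclusion stands, but these cases fall outside the B\'ezout count and the graph parametrisation as literally stated.
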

\begin{proof}For each $\left(s,t\right) \in V,$
one gets from \eqref{adj} that
$$|s|^2,|t|^2 \ll 
K_0 \  
\|\text{adj}\left(\Pi\right)\|_{\infty}  |\Delta_Q|^{-1}.$$
Using the estimates $\|\text{adj}\left(\Pi\right)\|_{\infty} \ll \|\Pi\|^2\ll
{\langle Q \rangle}^2
$ concludes the proof of the first assertion.
The norm $\|.\|$ is isometric to the supremum norm 
and hence $V$ is the intersection of the interior of
$3$ plane conic sections. 
Therefore $V$ is a finite union of at most $O(1)$ convex sets, 
where the implied constant is absolute, thus showing that 
$|\partial V|$ is
bounded by an absolute constant multiplied with the length of the box that contains $V$. Our last assertion is a 
consequence of ~\cite{daven2} as the set $V$ is semi--algebraic owing to the 
the fact that $\|.\|$ is isometric to the supremum norm.
\end{proof} 
Define for any $T \in \mathbb{R}_{\geq 1}$ and $n,\sigma,\tau \in \mathbb{N}$ such that 
$\gcd(\sigma,\tau,n)=1$,
\begin{equation}
\label{level0}
M_{\sigma,\tau}\left(T,n\right):=
\#
\left\{\left(s,t\right) \in \mathbb{Z}^2:
\begin{array}{l}
\left(s,t\right) \equiv \left(\sigma,\tau\right) \mod n,\\
t>0,\|\mathbf{q}\left(s,t\right)\|\leq T
\end{array}
\right\}.
\end{equation}
\begin{lem}
\label{lem6} 
For any $T,n,\sigma,\tau$ as above
with
$\gcd(\sigma,\tau,n)=1$ and  
$n|\mathbf{q}\left(\sigma,\tau\right),$
one has
$$
M^*_{\sigma,\tau}\left(T,n\right)=
\sum_{\substack{1 \leq m \leq 
\left(2TK_0 / n\right)^{\frac{1}{2}} \\ \gcd(m,n)=1}}\mu\left(m\right)
M_{\bar{m}\sigma,\bar{m}\tau}
\left(\frac{T}{m^2},n\right),
$$
where $\bar{m}$ denotes 
the inverse of $m\mod{n}.$
\end{lem}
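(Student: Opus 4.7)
The plan is to perform a Möbius inversion on the primitivity condition in $M^*_{\sigma,\tau}(T,n)$ using the version recorded as Lemma~\ref{lem2}, with the distinguished modulus being $n$ itself. Take
\[
\mathcal{A}=\{(s,t)\in\mathbb{Z}^2:(s,t)\equiv(\sigma,\tau)\!\mod n,\ t>0,\ \|\mathbf{q}(s,t)\|\leq T\}.
\]
Because $\gcd(\sigma,\tau,n)=1$ by hypothesis, every $(s,t)\in\mathcal{A}$ satisfies $\gcd(s,t,n)=1$ automatically, so the side-condition appearing in Lemma~\ref{lem2} is free, and I obtain
\[
M^*_{\sigma,\tau}(T,n)=\sum_{\substack{m\geq 1\\ \gcd(m,n)=1}}\mu(m)\,\#\{(s,t)\in\mathcal{A}:m\mid s,\ m\mid t\}.
\]

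Next I would parametrise $(s,t)=(ms',mt')$. Since $\gcd(m,n)=1$, the congruence $(ms',mt')\equiv(\sigma,\tau)\!\mod n$ is equivalent to $(s',t')\equiv(\bar m\sigma,\bar m\tau)\!\mod n$, and the homogeneity $\mathbf{q}(ms',mt')=m^2\mathbf{q}(s',t')$ turns the norm condition into $\|\mathbf{q}(s',t')\|\leq T/m^2$. Positivity of $t$ is equivalent to positivity of $t'$, so the inner cardinality is exactly $M_{\bar m\sigma,\bar m\tau}(T/m^2,n)$, yielding the identity without the upper cut-off on $m$.

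The remaining task, and the step that requires the hypotheses in a genuine way, is to justify the truncation $m\leq(2TK_0/n)^{1/2}$. Here I exploit that the pair $(s',t')$ produces an integer vector $\mathbf{q}(s',t')$ divisible by $n$: indeed $n\mid\mathbf{q}(\sigma,\tau)$ by assumption, and $\mathbf{q}(ms',mt')=m^2\mathbf{q}(s',t')\equiv\mathbf{q}(\sigma,\tau)\equiv\mathbf{0}\!\mod n$, so $\gcd(m,n)=1$ forces $n\mid\mathbf{q}(s',t')$ componentwise. Combined with the non-singularity of $Q$ (which, as in the proof of Lemma~\ref{lem3}, forces $\mathbf{q}(s',t')=\mathbf{0}$ to imply $(s',t')=(0,0)$, excluded by $t'>0$), this gives $\|\mathbf{q}(s',t')\|_{\infty}\geq n$. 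Invoking the defining inequality $\|\cdot\|_\infty\leq K_0\|\cdot\|$ implicit in \eqref{psy} then yields $\|\mathbf{q}(s',t')\|\geq n/K_0$, and comparison with the upper bound $T/m^2$ gives $m^2\leq TK_0/n$, hence the stated range for $m$.

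The only subtlety is the verification that $\mathbf{q}(s',t')$ cannot vanish when $(s',t')\neq(0,0)$; in the coordinates \eqref{el}--\eqref{g} this reduces to noting that simultaneous vanishing of $L$ and $g$ at a nonzero point would force $\Delta_Q=ae^2-deb+fb^2=0$ via the line $bs'+et'=0$, contradicting non-singularity. Once that is settled, no non-trivial analytic work is needed: the whole lemma is elementary arithmetic once the right Möbius inversion is set up, and the clean appearance of $n$ in the truncation is the reward for exploiting the divisibility $n\mid\mathbf{q}(s',t')$.
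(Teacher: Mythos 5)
Your argument is correct and follows essentially the same route as the paper: Möbius inversion via Lemma~\ref{lem2} (with the $\gcd(s,t,n)=1$ condition automatic from $\gcd(\sigma,\tau,n)=1$), then truncation of the $m$-sum by combining $n\mid\mathbf{q}(s,t)$, the nonvanishing of $\mathbf{q}(s,t)$ for $t>0$ (via $\Delta_Q\neq 0$, which the paper gets from \eqref{adj} and you get from the $L,g$ computation), and the comparison of $\|\cdot\|$ with $\|\cdot\|_\infty$ through $K_0$. The only cosmetic difference is that you deduce $\|\mathbf{q}\|_\infty\geq n$ directly, whereas the paper argues by contradiction from $\|\mathbf{q}\|_\infty<n/2$, so your cutoff $(TK_0/n)^{1/2}$ is marginally sharper and of course still yields the stated identity.
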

\begin{proof}
The condition $\|\mathbf{q}\left(s,t\right)\|\leq T$
implies by Lemma~\ref{lem5}, that the number of $\left(s,t\right)$ 
counted by $M^*_{\sigma,\tau}\left(T,n\right)$ is finite. Therefore
Lemma~\ref{lem2} may be applied to yield
\begin{equation}
\label{coltrane}
M^*_{\sigma,\tau}\left(T,n\right)=
\sum_{\substack{m=1 \\ \gcd(m,n)=1}}^{\infty}\mu\left(m\right)
M_{\bar{m}\sigma,\bar{m}\tau}\left(\frac{T}{m^2},n\right).
\end{equation}
If $m>\left(2 K_0 T / n\right)^{\frac{1}{2}},$ then each $\left(s,t\right)$ taken into account by 
$M_{\bar{m}\sigma,\bar{m}\tau}\left(\frac{T}{m^2},n\right),$
satisfies $\|\mathbf{q}\left(s,t\right)\|_{\infty} <\frac{n}{2},$
due to \eqref{psy}.
The assumptions on $\sigma,\tau,n,$
imply that $n|\mathbf{q}\left(s,t\right)$
which is only possible if $\mathbf{q}\left(s,t\right)=\mathbf{0}.$
Due to \eqref{adj}, one has $t=0$
which contradicts the definition of
\eqref{level0}. This shows that only integers
$m\leq\left(2 K_0 T / n\right)^{\frac{1}{2}}$
make a non--zero contribution to
\eqref{coltrane}, which concludes the proof of the lemma.
\end{proof} 
Recall the definitions \eqref{psy00} and \eqref{level0}.
\begin{lem}
\label{lem7} For any $T,n,\sigma,\tau$ as above, we have
$$
M_{\sigma,\tau}\left(T,n\right)= \mathrm{vol}\left(V\right)
 \frac{T}{n^2}+O\left(1+ \frac{\left(K_0 T\right)^{\frac{1}{2}}}{n}
\frac{{\langle Q \rangle}}{|\Delta_Q|^{\frac{1}{2}}} \right).
$$
\end{lem}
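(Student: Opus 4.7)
The plan is to reduce $M_{\sigma,\tau}(T,n)$ to counting ordinary integer lattice points in a dilated, translated copy of the region $V$ from \eqref{psy00}, and then invoke Davenport's theorem on lattice points in semi-algebraic sets. Because $\mathbf{q}(s,t)$ is homogeneous of degree $2$ in $(s,t)$, the condition $\|\mathbf{q}(s,t)\|\leq T$ is equivalent to $(s,t)\in T^{1/2}V$, and the positivity constraint $t>0$ is already built into the definition of $V$.

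Writing $s=\sigma+nu$ and $t=\tau+nv$ with $(u,v)\in\mathbb{Z}^2$, one sees that $M_{\sigma,\tau}(T,n)$ equals the number of points of $\mathbb{Z}^2$ lying in the affinely transformed region
$$R \;:=\; \tfrac{1}{n}\bigl(T^{1/2}V - (\sigma,\tau)\bigr).$$
A direct change of variables gives $\mathrm{vol}(R)=(T/n^{2})\mathrm{vol}(V)$ and $|\partial R|=(T^{1/2}/n)|\partial V|$. Lemma~\ref{lem5} establishes that $V$ is bounded and semi-algebraic, with every line parallel to a coordinate axis meeting $V$ in $O(1)$ intervals; all of these properties are preserved by the affine map $(s,t)\mapsto n^{-1}((s,t)-(\sigma,\tau))$, and therefore hold for $R$ as well. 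Davenport's theorem then yields
$$M_{\sigma,\tau}(T,n) \;=\; \mathrm{vol}(R) \;+\; O\bigl(1 + |\partial R|\bigr),$$
and inserting the bound $|\partial V|\ll\langle Q\rangle(K_0/|\Delta_Q|)^{1/2}$ from Lemma~\ref{lem5} recovers exactly the estimate claimed.

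The only point demanding care is that Davenport's theorem is being applied after an affine transformation whose parameters depend on $n$, $T$, and $(\sigma,\tau)$; however, since the implied constants in Lemma~\ref{lem5} are absolute and the transformation preserves both semi-algebraicity and the "bounded number of intervals per axis-parallel line" property, no hidden dependence on these parameters creeps into the error. The additive $1$ absorbs the degenerate regime in which the scaled region is so small that the volume and boundary terms are themselves $O(1)$, which is the only genuinely delicate aspect of the argument.
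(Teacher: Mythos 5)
Your proposal is correct and follows essentially the same route as the paper: reduce $M_{\sigma,\tau}(T,n)$ to counting $\mathbb{Z}^2$-points in a scaled and translated copy of $V$, then apply Davenport's lattice-point theorem with the properties of $V$ supplied by Lemma~\ref{lem5}. The only cosmetic difference is that you apply Davenport directly to the translated region, whereas the paper first compares with the untranslated dilation $T^{1/2}V/n$ (absorbing the shift into an $O(1+|\partial V|T^{1/2}/n)$ term) and then invokes Davenport; both are legitimate since the translation preserves all hypotheses.
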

\begin{proof} 
The quantity 
$M_{\sigma,\tau}\left(T,n\right)$
equals the number of integer points
in the region  
$$
\frac{\ T^{\frac{1}{2}}}{n}V-\left(\frac{\sigma}{n},
\frac{\tau}{n}\right),$$
where $V$ is defined in \eqref{psy00}.
We thus deduce that
\[
M_{\sigma,\tau}\left(T,n\right)
=\sharp
\bigg
\{
\mathbb{Z}^2
\cap
V
\frac{\ T^{\frac{1}{2}}}{n}
\bigg\}
+O\bigg(1+|\partial V|
\frac{\ T^{\frac{1}{2}}}{n}
\bigg),
\]
where $|\partial V|$ denotes
the length of the boundary 
of $V.$ 
The assumptions of the theorem in~\cite[pg.180]{daven1} are fulfilled due to
Lemma~\ref{lem5},
thus yielding
\[
\sharp
\bigg
\{
\mathbb{Z}^2
\cap
V
\frac{\ T^{\frac{1}{2}}}{n}
\bigg\}
=
\mathrm{vol}\left(V\right)
 \frac{T}{n^2}+O\left(1+ \frac{\left(K_0 T\right)^{\frac{1}{2}}}{n}
\frac{{\langle Q \rangle}}{|\Delta_Q|^{\frac{1}{2}}} \right).
\]
This estimate, when combined with the second assertion of  
Lemma~\ref{lem5},
finishes the proof.
\end{proof}

\section{The asymptotic formula}
\label{5s}
We are now in possession of the 
required lemmata to show the validity of Proposition~\ref{prop1}.
Before proceeding
to the proof we should remark that
we shall show the asymptotic formula
of Proposition~\ref{prop1}
with a different constant 
in place of $c_{Q},$
and at the end of this section
we will explain why the two constants coincide.

Let us now define the new constant,
which we denote by $c'_{Q}.$
Recall the definitions
\eqref{ro} and \eqref{psy00}.
Let
\[
\sigma'_{\infty}:=\mathrm{vol}\left(V\right)
\]
and for any prime $p,$
let
\[
\sigma'_p:=\left(1-\frac{1}{p^2}\right)\left(1+\frac{1}{\left(1+\frac{1}{p}\right)} 
\sum_{d \geq 1}
\frac{\rho^*\left(p^d\right)}{p^d}\right).
\]
Lemma~\ref{lem1} shows that the product 
$\prod_{p}\sigma'_p$ taken over all primes $p$ converges and we may thus define
$$
c'_{Q}:=
\sigma'_{\infty}
\prod_{p}\sigma'_p \ .
$$
Notice that Lemma~\ref{lem7}
implies that 
\begin{equation}
\label{vol0}
\sigma'_{\infty} \ll {\langle Q \rangle}^2 \frac{K_0}{|\Delta_Q|}.
\end{equation}

In light of Lemma~\ref{lem3}, it suffices to prove
Proposition~\ref{prop1}
for $\mathcal{N}\left(Q,B\right)$
in place of
$N\left(Q,B\right).$ Combining Lemma~\ref{lem4} and Lemma~\ref{lem6}, gives 
\begin{equation}
\label{miles}
\mathcal{N}\left(Q,B\right)
=
\sum_{k\l| 
\Delta_Q / \gcd(b,e) } 
\mu\left(k\right)
\Osum_{\substack{\left(\sigma,\tau\right) \mod{k\l} \\ k\l|\mathbf{q}\left(\sigma,\tau\right) }}
\sum_{\substack{m \leq 
\left(2BK_0 /k\right)^{\frac{1}{2}} \\ \gcd(m,k\lambda)=1}}\mu\left(m\right)
M_{\bar{m}\sigma,\bar{m}\tau}\left(\frac{B \lambda}{m^2},k \lambda \right).
\end{equation}
Now notice that
for $$\mathcal{L}:= \frac{\left(K_0 B\right)^{\frac{1}{2}}}{k \lambda^{\frac{1}{2}}}
\frac{{\langle Q \rangle}}{|\Delta_Q|^{\frac{1}{2}}},$$
the bound
\eqref{vol0} and Lemma~\ref{lem7}
imply that
$$M_{\bar{m}\sigma,\bar{m}\tau}
\left(\frac{B \lambda}{m^2},k \lambda\right)=
\begin{cases} \sigma'_{\infty}
 \frac{B}{m^2 k^2\lambda}+O\left(\frac{\mathcal{L}}{m} \right) &\mbox{if } m \leq \mathcal{L} \\
O\left(1\right) & \mbox{otherwise. } \end{cases}
$$
The contribution to \eqref{miles}
coming from those $m$ with $m>\mathcal{L}$ is therefore
$\ll_{\epsilon}
\left(BK_0\right)^{\frac{1}{2}}
|\Delta_Q|
{\langle Q \rangle}^{\epsilon}.
$
We have used the bound $\tau_{k}\left(n\right)\ll_{k,\epsilon}n^{\epsilon}$
as well as part $\left(ii\right)$ of 
Lemma~\ref{lem1}.
The contribution of the remaining $m$ is 
\begin{align*}
&\sigma'_{\infty} B \sum_{k\l|\Delta_Q} \frac{\mu\left(k\right)\rho^*\left(k\l\right)}{k^2\l}
\sum_{\substack{  m \leq \mathcal{L}  \nonumber \\
\gcd(m,k\l)=1}} \frac{\mu\left(m\right)}{m^2}\\
&+O_{\epsilon}\left(\left(B K_0\right)^{\frac{1}{2}} \left(\log B K_0\right) 
{\langle Q \rangle}^{1+\epsilon} \gcd(b,e)^{\frac{1}{2}} \right).
\end{align*}
Extending the summation over $m$ to
infinity, the error introduced
in the 
main term is $
\ll_{\epsilon} \left(BK_0\right)^{\frac{1}{2}}
{\langle Q \rangle}^{1+\epsilon}
\gcd(b,e)^{\frac{1}{2}}
,
$
where
we have made use of \eqref{vol0}.
The fact that 
$
\gcd(b,e)^2|\Delta_Q$
and
$\gcd(b,e)|\delta_Q
$
provides the error term in Proposition~\ref{prop1}.
Using the fact that $\rho^*$ is multiplicative and supported on the divisors of $\Delta_Q$ 
we deduce that
$$
\sum_{k,\l \in \mathbb{N}} \frac{\mu\left(k\right)\rho^*\left(k\l\right)}{k^2\l}
\sum_{\substack{ m \in \mathbb{N} \nonumber \\
\gcd(m,k\l)=1}} \frac{\mu\left(m\right)}{m^2}=
\prod_{p}
\left(1-\frac{1}{p^2}+\left(1-\frac{1}{p}\right)\sum_{d \in \mathbb{N}}\frac{\rho^*\left(p^d\right)}{p^d}\right),
$$
which shows that the leading constant is equal to 
$c'_Q$, as desired. 

We proceed to explain why the leading constants
$c_Q$ and $c'_Q$ are equal. One can indeed 
produce an elementary, yet lengthy, argument of this assertion,
performing a parametrisation argument 
over
$\mathbb{Z} / p^n\mathbb{Z}$
for appropriately chosen primes $p$ and positive integers $n,$
instead of over $\mathbb{Q}.$ However,
as the referee kindly pointed out, it is shown in \cite[Sections 3 and 6.2]{peyre}
that $c_Q=c'_Q$ follows from \cite{fmt}.
More precisely, the fact that points are equidistributed on the projective line
implies
that the leading constants agree for any height,
including the one coming from the embedding of the projective line as a conic.
This concludes the proof of Proposition~\ref{prop1}.

\section{The proof of Theorem~\ref{thrm01}}
\label{7s}
In this section we complete the 
proof of Theorem~\ref{thrm01} by transforming the general form $Q$ into one to
which Proposition~\ref{prop1} applies. The 
next lemma shows that one can find 
a suitable transformation with the lowest possible height. 
\begin{lem}
\label{lem10}
Let $\mathbf{a} \in \mathbb{Z}^3_{\text{prim}}.$
Then there exists 
$M \in SL_3\left(\mathbb{Z}\right)$ whose second 
column is $\mathbf{a}$ and whose entries 
have maximum modulus $O\left(\|\mathbf{a}\|_{\infty}\right).$
\end{lem}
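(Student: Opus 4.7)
The approach is an explicit construction via two successive applications of Bezout's identity. Write $\mathbf{a}=(a_1,a_2,a_3)^T$. The degenerate case in which two coordinates vanish (so that $\mathbf{a}=\pm\mathbf{e}_i$) is settled by a signed permutation matrix with entries $\pm 1$; henceforth I assume at most one coordinate vanishes and, after possibly permuting rows (which only left-multiplies $M$ by a signed permutation in $SL_3(\mathbb{Z})$ and hence preserves the sup-norm of the entries), that $g:=\gcd(a_1,a_2)\geq 1$.

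For the main step, write $a_1=g\alpha_1$ and $a_2=g\alpha_2$, so $\gcd(\alpha_1,\alpha_2)=1$. By Bezout, with reduction modulo $\alpha_2$, I choose $u,v\in\mathbb{Z}$ with $v\alpha_1-u\alpha_2=1$, $|v|\leq|\alpha_2|/2$, and hence $|u|\leq|\alpha_1|/2+1$. Since $\mathbf{a}$ is primitive, $\gcd(g,a_3)=1$, so I similarly choose $p,q\in\mathbb{Z}$ with $qg-pa_3=1$, $|p|\leq g/2$, and $|q|\leq|a_3|/2+1$. I then define
\[
M:=\begin{pmatrix} -u & a_1 & p\alpha_1 \\ -v & a_2 & p\alpha_2 \\ 0 & a_3 & q \end{pmatrix},
\]
whose second column is $\mathbf{a}$ by construction. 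Expanding $\det M$ along the bottom row gives
\[
\det M \;=\; (qg-pa_3)(v\alpha_1-u\alpha_2) \;=\; 1,
\]
so $M\in SL_3(\mathbb{Z})$ as required.

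It remains to verify the size bound. The middle column is $\mathbf{a}$; the first column has entries bounded by $\max(|\alpha_1|,|\alpha_2|)+1\ll \|\mathbf{a}\|_\infty/g\leq\|\mathbf{a}\|_\infty$; and the third column has $|p\alpha_i|\leq(g/2)(|a_i|/g)\leq\|\mathbf{a}\|_\infty/2$ together with $|q|\leq\|\mathbf{a}\|_\infty/2+1$, so every entry is $O(\|\mathbf{a}\|_\infty)$. I anticipate no serious obstacle; the construction is purely elementary, and the only subtleties are the boundary cases $\alpha_1=0$ or $\alpha_2=0$ (one of which must then be $\pm 1$, allowing a direct choice such as $u=\mp 1$, $v=0$) and the case $g=0$ (subsumed in the degenerate case treated at the outset), each of which is dispatched by an \emph{ad hoc} choice without affecting the stated bound.
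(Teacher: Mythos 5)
Your construction is correct: the determinant identity $\det M=(qg-pa_3)(v\alpha_1-u\alpha_2)=1$ checks out, the reductions modulo $\alpha_2$ and modulo $g$ do give $|u|\leq|\alpha_1|/2+1$, $|v|\leq|\alpha_2|/2$, $|p|\leq g/2$, $|q|\leq|a_3|/2+1$, and together with primitivity (which gives $\gcd(g,a_3)=1$) every entry is indeed $O\left(\|\mathbf{a}\|_{\infty}\right)$; the degenerate cases you flag are genuinely the only ones and are dispatched as you say. Your route differs from the paper's in a meaningful detail: the paper first produces a \emph{small dual vector}, i.e.\ it solves $\mathbf{a}^t\mathbf{y}=1$ and reduces it by multiples of $(a_2,-a_1,0)$ and $(a_3,0,-a_1)$ to get $\mathbf{x}$ with $\mathbf{a}^t\mathbf{x}=1$ and $\|\mathbf{x}\|_{\infty}\ll\|\mathbf{a}\|_{\infty}$, and then builds the first and third columns out of $\mathbf{x}$ via a second Bezout step involving $\gcd(x_1,x_2)$; you instead build the matrix directly from the coordinates of $\mathbf{a}$, via Bezout for $(a_1,a_2)/g$ and then for $(g,a_3)$, which is the classical completion of a primitive vector to a unimodular matrix. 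Your version avoids introducing the auxiliary vector $\mathbf{x}$ altogether and makes the determinant factor transparently as a product of the two Bezout relations, at the modest cost of having to treat the corner cases $\alpha_1=0$, $\alpha_2=0$ or $\mathbf{a}=\pm\mathbf{e}_i$ separately; the paper's dual-vector step is slightly less explicit but needs no case split beyond the initial reordering. Both arguments are elementary and give the same quality of bound, so either serves the application in \S\ref{7s} equally well.
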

\begin{proof}
By renaming indices if needed, 
we may assume that 
$$0 <|a_1| \leq |a_2|\leq |a_3|.$$
Let us notice that an integer solution to the equation
$\mathbf{a}^t \mathbf{y}=1$
exists, owing to the coprimality of 
$\mathbf{a}.$ The previous inequality implies that
we can pick $s,t \in \mathbb{Z}$
such that $\max\{|y_3-a_1 t|,|y_2-a_1 s|\}\leq \frac{|a_1|}{2}.$
Then the integer vector 
$$\mathbf{x}:=\mathbf{y}
+s\left(a_2,-a_1,0\right)
+t\left(a_3,0,-a_1\right)$$
satisfies $\mathbf{a}^t \mathbf{x}=1$
and $\|\mathbf{x}\|_{\infty} \ll \|\mathbf{a}\|_{\infty}.$

We now let $x'_i:=\frac{x_i}{\gcd(x_1, x_2)}, i=1,2$
so that $\gcd(x'_1,x'_2)=1.$ We know therefore that
an integer solution $(x,y)$ of 
$ x'_1 x+x'_2 y=x_3 $ can be found. Considering $y-tx'_1$ in place of $y$ if needed, we can prove as previously that we can find $(x,y)$ 
that satisfy the previous equationin addition to $\max\{|x|,|y|\}\ll
 \|\mathbf{x}\|_{\infty}.$ A direct calculation may then 
reveal that the matrix
$$M:=
 \begin{pmatrix}
 x'_2  & a_1   &  -x\\
 -x'_1 & a_2   &  -y \\
 0     & a_3   &  \gcd(x_1,x_2)
 \end{pmatrix}
$$
possesses the required properties.
\end{proof} 
\textit{Proof of Theorem~\ref{thrm01}.} It is given that
the quadratic form $Q$ possesses a rational zero. One can therefore 
find, using Cassels~\cite{cas}, a non--trivial integer zero $\boldsymbol{\xi}:=\left(x_0,y_0,z_0\right) \in 
\mathbb{Z}^3_{\text{prim}}$ of $Q$
such that $\|\boldsymbol{\xi}\|_{\infty} \ll {\langle Q \rangle}.$
We now transform the form $Q$ using $\mathbf{a}=\boldsymbol{\xi} $ in the previous lemma. It provides
an integer matrix $M$ of determinant $1$
and of size 
\begin{equation}
\label{ccaass}
\|M\|_{\infty} \ll {\langle Q \rangle}
\end{equation} 
such that the quadratic form $Q'$ defined by 
$$Q'\left(\mathbf{x}\right):=Q\left(M \mathbf{x}\right),$$
possesses the zero $\left(0,1,0\right).$
We define the norm given by
$$\|\mathbf{x}\|':=\|M\mathbf{x}\|$$
and notice that
$$
{\langle Q' \rangle}
\ll{\langle Q \rangle}^3.
$$
The fact that $M$ is unimodular implies that
the integer vector $\mathbf{x}$ is primitive
if and only if $M\mathbf{x}$ is. It therefore follows that
$$N\left(Q,B\right)=
N'\left(Q',B\right),
$$
where the notation $N'$ indicates 
a use of the norm $\|.\|'.$ 
Recall the definition \eqref{psy} of $K_0.$
Using the inequality
$\|M^{-1}\|_{\infty}
\leq 2 \|M\|^2_{\infty}$ 
and writing 
\newline
$\mathbf{x}=M^{-1}(M\mathbf{x})$
for all $\mathbf{x}\neq \mathbf{0}$,
implies that
$$
\|\mathbf{x}\|_{\infty}
\leq
2 \|M\|^2_{\infty}K_0\|\mathbf{x}\|'.$$
Therefore \eqref{ccaass} shows that for
$K'_0:=1+\sup_{\mathbf{x}\neq 0}
\frac{\ \|\mathbf{x}\|_{\infty}}{\|\mathbf{x}\|'},$
we have $$K'_0 \ll K_0 {\langle Q \rangle}^2.$$
Finally, notice that 
the discriminants $\Delta_Q, \Delta_{Q'}$ as well as the 
greatest common divisors $\delta_Q$ and  $\delta_{Q'}$ of the $2\times 2$ minors of the matrices 
of the quadratic forms $Q$ and $Q'$
remain invariant
under the unimodular transformation $M.$

We are now in a position to apply Proposition~\ref{prop1} to the form $Q'$
with all involved quantities modified as indicated hitherto.
We are provided with the error term 
\begin{equation}
\label{lastend}
\ll_{\epsilon} \left(B K_0\right)^{\frac{1}{2}} \log\left(B K_0\right) 
\min\left\{|\Delta_Q|^{\frac{1}{4}},\delta_Q^{\frac{1}{2}}\right\}
{\langle Q \rangle}^{4+\epsilon}.
\end{equation}
The bound
$|\Delta_Q| \ll {\langle Q \rangle}^3$
implies that this is
$$
\ll_{\epsilon} \left(B K_0\right)^{\frac{1}{2}} \log\left(B K_0\right) 
{\langle Q \rangle}^{^{\frac{19}{4}}+\epsilon}
$$
so that 
using the value
$\epsilon=\frac{1}{4}$
we obtain
the error term appearing in Theorem~\ref{thrm01}.
Recall the definition \eqref{si}
and \eqref{s} of the local densities.
It remains to show that they satisfy
$$\sigma_{\infty}\left(Q',\|.\|'\right) = \sigma_{\infty}\left(Q,\|.\|\right)$$
and 
$$\ \sigma_p\left(Q'\right) = \sigma_p\left(Q\right)$$
for any prime $p.$ The fact that the matrix $M$ is invertible$\mod{p^n}$ shows that $N_Q^*(p^n)=N_{Q'}^*(p^n)$ is valid,
which when used in \eqref{s} proves the latter equality.
The former is proved by performing the unimodular linear change of variables $\mathbf{x}=M\mathbf{X}$ in \eqref{si}. Hence
$$
\int_{\substack{|Q\left(\mathbf{x}\right)|\leq \epsilon \\
\|\mathbf{x}\| \leq 1}}1 \mathrm{d}\mathbf{x}
=
\int_{\substack{|Q'\left(\mathbf{X}\right)|\leq \epsilon \\
\|\mathbf{X}\|' \leq 1}}1\,\mathrm{d}\,\mathbf{X},$$
which finishes the proof of Theorem~\ref{thrm01}.

\subsection*{Acknowledgements}
The author would like to express his gratitude to T. Browning for suggesting the problem and for his valuable assistance during the course of the project.
He is furthermore indebted to Dr. Christopher Frei for useful  comments regarding an earlier version of this paper.

\end{document}